\def\ps@pprintTitle{%
 \let\@oddhead\@empty
 \let\@evenhead\@empty
 \def\@oddfoot{\centerline{\thepage}}%
 \let\@evenfoot\@oddfoot}
\newenvironment{abstracts}
 {\global\setbox\absbox=\vbox\bgroup
    \hsize=\textwidth
    \linespread{1}\selectfont}
 {\vspace{-\bigskipamount}\egroup}
\renewenvironment{abstract}[1][]
 {\if\relax\detokenize{#1}\relax\else\selectlanguage{#1}\fi
  \noindent\textbf{\abstractname}\par\medskip\noindent\ignorespaces}
 {\par\bigskip}
\theoremstyle{plain} \numberwithin{equation}{section}
\newtheorem{thm}{Theorem}[section]
\newtheorem{ques}[thm]{Question}
\theoremstyle{definition}
\newtheorem{rmk}[thm]{Remark}
\newcommand{\bi}{\begin{itemize}}
\newcommand{\ei}{\end{itemize}}
\newcommand{\bp}{\begin{proof}}
\newcommand{\ep}{\end{proof}}
\def\s-{\setminus}
\begin{document}

\title{On the invariance of the total Monge-Amp\`ere volume of Hermitian metrics}

\author[Chiose]{Ionu\c{t} Chiose}
\ead{Ionut.Chiose@imar.ro}
\address{
	Institute of Mathematics of the Romanian Academy,  P.O. Box 1-764, Bucharest 014700,  Romania}
	

\selectlanguage{english}

\begin{abstracts}
\begin{abstract}
  In this note, we describe the Hermitian metrics that leave the total Monge-Amp\`ere volume invariant.
In particular, we give several characterizations of the Hermitian metrics which satisfy the comparison principle for the complex Monge-Amp\`ere operator.
\end{abstract}
  
\end{abstracts}

\begin{keyword}
Hermitian metrics, Monge-Amp\`ere operator, Comparison principle
\end{keyword}

\maketitle

Let $g$ be a Hermitian metric on a compact complex manifold $X$ of dimension $n$. Suppose that it satisfies 
the equations 
\begin{equation}\label{gl}
i\partial\bar\partial g^k=0, \forall k=\overline{1,n-1}
\end{equation}
 (we denote by the same letter $g$ the fundamental form of the Hermitian metric $g$). These metrics were introduced by Guan and Li \cite{guanli} because these metrics leave the total Monge-Amp\`ere volume unchanged, i.e., they satisfy

\begin{equation}\label{invariance}
\int_X(g+i\partial\bar\partial u)^n=\int_Xg^n,\forall u\in PSH(X,g)
\end{equation}
where $PSH(X,g)$ denotes the space of ${\mathcal C}^{\infty}$ real functions $u$ on $X$ such that $$g+i\partial\bar\partial u\geq 0$$ Indeed, the equality (\ref{invariance}) follows from (\ref{gl}) from Stokes' theorem applied twice.

In this note, we prove the converse of this statement, namely we show that a Hermitian metric which satisfies (\ref{invariance}) has to satisfy (\ref{gl}). This answers Question 29 in \cite{dinew} (see also Problem 11.1 in \cite{dinew1}).

\begin{thm} 
\label{invariant}
Let $(X,g)$ be a compact complex Hermitian manifold 
of dimension $n$ such that 
\begin{equation}
\int_X(g+i\partial\bar\partial u)^n=\int_Xg^n
\end{equation} 
for any smooth $g$-plurisubharmonic function $u$ on $X$. Then 
\begin{equation}
i\partial\bar\partial g^k=0, \forall k=\overline{1,n-1}
\end{equation} 
\end{thm}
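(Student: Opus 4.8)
The plan is to expand the volume identity with the binomial theorem, use integration by parts to rewrite each term in terms of $i\partial\bar\partial g^{k}$, and then read off from the resulting family of scalar identities the pointwise vanishing of these forms. First I would write, for any smooth $g$-plurisubharmonic $u$,
$$0=\int_X(g+i\partial\bar\partial u)^n-\int_Xg^n=\sum_{j=1}^{n}\binom nj\int_Xg^{n-j}\wedge(i\partial\bar\partial u)^j .$$
The $j=n$ term is $\int_X(i\partial\bar\partial u)^n$, the integral of an exact form, hence zero. For $1\le j\le n-1$ I would use the elementary identity $\int_X\alpha\wedge i\partial\bar\partial u=\int_X u\,i\partial\bar\partial\alpha$, valid for any smooth $(n-1,n-1)$-form $\alpha$ on a compact manifold (Stokes applied twice), together with the facts that $(i\partial\bar\partial u)^{j-1}$ is $d$-closed and that $i\partial\bar\partial g^{n-j}$ is $d$-closed (since $d\,i\partial\bar\partial=0$), which give $i\partial\bar\partial\bigl(g^{n-j}\wedge(i\partial\bar\partial u)^{j-1}\bigr)=i\partial\bar\partial(g^{n-j})\wedge(i\partial\bar\partial u)^{j-1}$ and hence $\int_Xg^{n-j}\wedge(i\partial\bar\partial u)^j=\int_Xi\partial\bar\partial(g^{n-j})\wedge u\wedge(i\partial\bar\partial u)^{j-1}$. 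So the hypothesis is equivalent to $\sum_{j=1}^{n-1}\binom nj\int_Xi\partial\bar\partial(g^{n-j})\wedge u\wedge(i\partial\bar\partial u)^{j-1}=0$ for all $u\in PSH(X,g)$.

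Since $g>0$, for every smooth real function $v$ the function $\varepsilon v$ lies in $PSH(X,g)$ once $\varepsilon>0$ is small; substituting $u=\varepsilon v$ turns the last identity into a polynomial in $\varepsilon$ vanishing on an interval, so each coefficient vanishes. Writing $k=n-j$, this yields
$$\int_Xi\partial\bar\partial(g^{k})\wedge v\wedge(i\partial\bar\partial v)^{\,n-k-1}=0\qquad\text{for all smooth }v\text{ and all }k=1,\dots,n-1 .$$
For $k=n-1$ this reads $\int_X v\,i\partial\bar\partial(g^{n-1})=0$ for all $v$, hence $i\partial\bar\partial g^{n-1}=0$ because $i\partial\bar\partial g^{n-1}$ is a top form. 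For the remaining $k\le n-2$ I set $\Phi:=i\partial\bar\partial g^{k}$ and $p:=n-k-1\ge1$, noting that $\Phi$ is a $d$-closed real $(k+1,k+1)$-form.

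The core step is a polarization of the identity $\int_X\Phi\wedge v\wedge(i\partial\bar\partial v)^p=0$. Replacing $v$ by $v+sw$ and reading off the coefficient of $s$ gives $\int_X\Phi\wedge w\wedge(i\partial\bar\partial v)^p+p\int_X\Phi\wedge v\wedge i\partial\bar\partial w\wedge(i\partial\bar\partial v)^{p-1}=0$. In the second integral I would integrate by parts to move $i\partial\bar\partial$ off $w$: with $\Psi:=\Phi\wedge(i\partial\bar\partial v)^{p-1}$ closed one has $i\partial\bar\partial(v\Psi)=i\partial\bar\partial v\wedge\Psi$, so that integral equals $\int_X\Phi\wedge w\wedge(i\partial\bar\partial v)^p$, whence $(p+1)\int_X\Phi\wedge w\wedge(i\partial\bar\partial v)^p=0$; thus the linear occurrence of $v$ decouples and $\int_X\Phi\wedge w\wedge(i\partial\bar\partial v)^p=0$ for all smooth $v,w$. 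Polarizing fully in $v$ (put $v=\sum_{i=1}^p t_iv_i$ and take the coefficient of $t_1\cdots t_p$) gives $\int_X\Phi\wedge w\wedge i\partial\bar\partial v_1\wedge\cdots\wedge i\partial\bar\partial v_p=0$ for all smooth $w,v_1,\dots,v_p$; since $w$ is arbitrary and $\Phi\wedge i\partial\bar\partial v_1\wedge\cdots\wedge i\partial\bar\partial v_p$ is a top form, it must vanish identically for every choice of $v_1,\dots,v_p$.

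To conclude I would work at a point $x_0$: every real $(1,1)$-covector at $x_0$ is $i\partial\bar\partial v(x_0)$ for a suitable smooth $v$ (realize its Hermitian matrix by a real quadratic in local coordinates, then cut off), so $\Phi(x_0)\wedge\eta_1\wedge\cdots\wedge\eta_p=0$ for all real $(1,1)$-covectors $\eta_1,\dots,\eta_p$ at $x_0$; as products of $p$ real $(1,1)$-covectors span $\Lambda^{p,p}T^*_{x_0}X$, this forces $\Phi(x_0)\wedge\xi=0$ for all $\xi\in\Lambda^{p,p}T^*_{x_0}X$, and since the wedge pairing $\Lambda^{k+1,k+1}\times\Lambda^{p,p}\to\Lambda^{n,n}$ is non-degenerate (note $k+1+p=n$) we get $\Phi(x_0)=0$. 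As $x_0$ is arbitrary, $i\partial\bar\partial g^{k}=0$ for every $k=1,\dots,n-1$. I expect the main obstacle to be the decoupling step: a naive polarization of $\int_X\Phi\wedge v\wedge(i\partial\bar\partial v)^p$ mixes the two roles of $v$ through cross terms, and it is exactly the $d$-closedness of $\Phi=i\partial\bar\partial g^{k}$ that allows integration by parts to recombine them; the Stokes computations and the pointwise linear algebra are routine but require care with signs and bidegrees.
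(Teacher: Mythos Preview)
Your proof is correct and uses the same ingredients as the paper: scaling by $\varepsilon$ to separate the powers, polarization, integration by parts via Stokes, and a pointwise linear-algebra conclusion. The only difference is the order: the paper first obtains $\int_X g^k\wedge(i\partial\bar\partial u)^{n-k}=0$, polarizes this to $\int_X g^k\wedge i\partial\bar\partial u_1\wedge\cdots\wedge i\partial\bar\partial u_{n-k}=0$, and only then integrates by parts to pull out $u_1$, which lands directly on $\int_X u_1\,i\partial\bar\partial g^k\wedge i\partial\bar\partial u_2\wedge\cdots\wedge i\partial\bar\partial u_{n-k}=0$ and makes your decoupling step unnecessary; your ordering is perfectly valid, just slightly longer.
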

\begin{proof} First note that if $u$ is an arbitrary ${\mathcal C}^{\infty}$ real function on $X$, then there exists $\varepsilon_{0}=\varepsilon_0(u)>0$ such that $\varepsilon u$
is $g$-plurisubharmonic for $\varepsilon\in (-\varepsilon_{0},\varepsilon_{0})$. Therefore, for any $u\in{\mathcal C}^{\infty}(X,{\mathbb R})$, we have 
\begin{equation}
\int_X(g+\varepsilon i\partial\bar\partial u)^n=\int_Xg^n,\forall \varepsilon\in(-\varepsilon_0,\varepsilon_0)
\end{equation}
If we expand the expression on the left and write it as a polynomial in $\varepsilon$, we obtain that
\begin{equation}
\int_Xg^k\wedge(i\partial\bar\partial u)^{n-k}=0,\forall k=\overline{1,n-1}
\end{equation}
Now fix $k\in\{1,2,...,n-1\}$ and if 
$u_1$, $u_2$,..., $u_{n-k}$ are arbitrary ${\mathcal C}^{\infty}$ functions on $X$ and $t_1$, $t_2$,..., $t_{n-k}$ are arbitrary real numbers, then, from 
\begin{equation}
\int_Xg^k\wedge\left (i\partial\bar\partial\sum_{i=1}^{n-k}t_i u_i\right )^{n-k}=0
\end{equation}
it follows that
\begin{equation}
\int_Xg^k\wedge i\partial\bar\partial u_1\wedge i\partial\bar\partial u_2\wedge...\wedge i\partial\bar\partial u_{n-k}=0
\end{equation}
Now use Stokes' theorem twice to obtain that, for arbitrary ${\mathcal C}^{\infty}$ functions $u_1$, $u_2$,..., $u_{n-k}$ on $X$ we have
\begin{equation}
\int_X u_1i\partial\bar\partial g^k\wedge i\partial\bar\partial u_2\wedge...\wedge i\partial\bar\partial u_{n-k}=0
\end{equation}
Since the function $u_1$ is arbitrary on $X$, we obtain that
\begin{equation}
i\partial\bar\partial g^k\wedge i\partial\bar\partial u_2\wedge...\wedge i\partial\bar\partial u_{n-k}=0
\end{equation}
on $X$ for any $u_2,..., u_{n-k}\in{\mathcal C}^{\infty}(X,{\mathbb R})$. It is clear that we can let the functions $u_2,...,u_{n-k}$ be complex valued. Fix $x\in X$ and let $z_1,z_2,...,z_n$ be local coordinates around $x$. If
\begin{equation}
i\partial\bar\partial g^k=
\sum_{\substack{i_1<...<i_{k+1}\\j_1<...j_{k+1}}}g_{i_1...i_{k+1}\bar j_1...\bar j_{k+1}}
dz_{i_1}\wedge ...\wedge dz_{i_{k+1}}\wedge d\bar z_{j_1}\wedge...\wedge d\bar z_{j_{k+1}}
\end{equation} 
then let 
\begin{equation}
\{l_1,...,l_{n-k-1}\}=\{1,...,n\}\setminus \{i_1,...,i_{k+1}\}
\end{equation}
 and 
\begin{equation}
\{m_1,...,m_{n-k-1}\}=\{1,...,n\}\setminus \{j_1,...,j_{k+1}\}
\end{equation}
 and take $u_2=z_{l_1}\bar z_{m_1}$,...,$u_{n-k}=z_{l_{n-k-1}}\bar z_{m_{n-k-1}}$ near $x$ to obtain 
\begin{equation}
g_{i_1...i_{k+1}\bar j_1...\bar j_{k+1}}(x)=0
\end{equation}
hence $i\partial\bar\partial g^k=0$.
\end{proof}

Combining the above result with \cite{dinew2}, we obtain the following

\begin{thm}\label{main}
Let $X$ be a compact complex manifold of dimension $n$ with a Hermitian metric $g$. Then the following are equivalent:
\begin{itemize}

\item[i)] $i\partial\bar\partial g=i\partial g\wedge \bar\partial g=0$

\item[ii)] $i\partial\bar\partial g=i\partial\bar\partial g^2=0$

\item[iii)] $i\partial\bar\partial g^k=0,\forall k=\overline{1,n-1}$

\item[iv)] $i\partial\bar\partial g\geq 0$, $i\partial g\wedge \bar\partial g\geq 0$

\item[v)] $\int_{\{u<v\}}(g+i\partial\bar\partial v)^n\leq\int_{\{u<v\}}(g+i\partial\bar\partial u)^n$, $\forall u,v\in PSH(X,g)$

\item[vi)] $\int_X(g+i\partial\bar\partial u)^n=\int_Xg^n$, $\forall u\in PSH (X, g)$
\end{itemize}

\end{thm}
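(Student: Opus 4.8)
The plan is to separate the six conditions into a differential-geometric group --- namely (i), (ii), (iii), (iv) and (vi) --- which I will treat by a Leibniz computation together with Stokes' theorem and Theorem~\ref{invariant}, and the single pluripotential condition (v), which I will attach using \cite{dinew2}.

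The main computational tool is the pointwise identity
\[
i\partial\bar\partial(g^{k})=k\,g^{k-1}\wedge i\partial\bar\partial g+k(k-1)\,g^{k-2}\wedge i\partial g\wedge\bar\partial g,\qquad k=1,\dots,n-1,
\]
which I would derive by applying the Leibniz rule for $\partial$ and $\bar\partial$ to $g^{k}=g\wedge\cdots\wedge g$ (all sign factors are trivial because $g$ has even degree). From it one reads off: (i)$\Rightarrow$(iii), since both terms on the right then vanish; (iii)$\Rightarrow$(ii), by specializing to $k=1$ and $k=2$; and (ii)$\Rightarrow$(i), since the case $k=2$ says $i\partial\bar\partial g^{2}=2\,g\wedge i\partial\bar\partial g+2\,i\partial g\wedge\bar\partial g$, so that $i\partial\bar\partial g=i\partial\bar\partial g^{2}=0$ forces $i\partial g\wedge\bar\partial g=0$. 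This gives (i)$\Leftrightarrow$(ii)$\Leftrightarrow$(iii). I would then close the loop to (vi): (iii)$\Rightarrow$(vi) is exactly the twofold application of Stokes' theorem recalled in the introduction, and (vi)$\Rightarrow$(iii) is Theorem~\ref{invariant}.

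For (i)$\Leftrightarrow$(iv): the implication (i)$\Rightarrow$(iv) is immediate, and for (iv)$\Rightarrow$(i) the plan is to integrate the displayed identity, taken with $k=n-1$, over $X$. Since $g^{n-1}$ has type $(n-1,n-1)$, Stokes' theorem gives $\int_X i\partial\bar\partial(g^{n-1})=0$, so the identity yields
\[
0=\int_{X} g^{n-2}\wedge i\partial\bar\partial g+(n-2)\int_{X} g^{n-3}\wedge i\partial g\wedge\bar\partial g,
\]
where $g^{0}=1$ and the second term is absent when $n=2$. Under hypothesis (iv) both summands are integrals of nonnegative top-degree forms, hence each vanishes; since $g^{n-2}$ and $g^{n-3}$ are strictly positive, the positivity assumptions then force $i\partial\bar\partial g=0$ and $i\partial g\wedge\bar\partial g=0$ pointwise, i.e.\ (i).

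It remains to fit in (v). The implication (v)$\Rightarrow$(vi) I would prove directly: given $u\in PSH(X,g)$, choose a constant $C>\sup_X|u|$ and apply (v) first to the pair $(u-2C,\,0)$ and then to the pair $(-2C,\,u)$; in each case the relevant superlevel set is all of $X$, the constant functions and the shift $u-2C$ lie in $PSH(X,g)$, and $i\partial\bar\partial$ is unchanged by constant shifts, so one obtains $\int_X g^{n}\le\int_X(g+i\partial\bar\partial u)^{n}\le\int_X g^{n}$. For the reverse I would invoke \cite{dinew2}, which establishes the comparison principle (v) for metrics satisfying (i) --- equivalently (iv); combined with the equivalences above this closes the cycle and proves the theorem. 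I expect the genuinely hard ingredient to be precisely this input from \cite{dinew2}: unlike the rest of the argument, the comparison principle is a substantial pluripotential-theoretic statement rather than a formal manipulation. Within the self-contained portion the only step that is not purely formal is the use of positivity in (iv)$\Rightarrow$(i) --- the fact that a (weakly) positive form wedging to zero against a strictly positive power of a Hermitian metric must itself vanish.
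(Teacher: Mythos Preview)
Your proposal is correct and follows essentially the same cycle as the paper: the equivalence (i)$\Leftrightarrow$(ii)$\Leftrightarrow$(iii) via the Leibniz identity, (iii)$\Rightarrow$(iv) trivially, (iv)$\Rightarrow$(v) from \cite{dinew2}, (v)$\Rightarrow$(vi) by the constant-shift trick, and (vi)$\Rightarrow$(iii) from Theorem~\ref{invariant}. Your direct Stokes/positivity argument for (iv)$\Rightarrow$(i) is not needed to close the cycle, but it is exactly the alternative proof the paper records in the remark immediately following the theorem.
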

\begin{proof}
It is clear that $i)$, $ii)$ and $iii)$ are equivalent and that they imply $iv)$. Proposition 3.2 in \cite{dinew2} shows that $iv)$ implies $v)$. 
If $u\in PSH(X,g)$, then $u-C<0<u+C$ for a constant $C$ large enough. This shows that $v)$ implies $vi)$ since $u-C, 0, u+C\in PSH(X,g)$. Finally, $vi)$ implies $iii)$ from the above Theorem \ref{invariant}.
\end{proof}

\begin{rmk}
$v)$ is called the comparison principle and it plays a crucial role in the pluripotential study of the complex Monge-Amp\`ere equation.
\end{rmk}

\begin{rmk}
The above result implies that a Hermitian metric $g$ on a compact complex manifold which satisfies  $i\partial\bar\partial g\geq 0$ and  $i\partial g\wedge \bar\partial g\geq 0$ actually satisfies $i\partial\bar\partial g=i\partial g\wedge\bar\partial g=0$, and the proof goes through Proposition 3.2 in \cite{dinew2}. However, there is a more direct proof of this fact. Indeed, from 
\begin{equation}
d(ig^{n-2}\wedge \bar\partial g)=(n-2)g^{n-3}\wedge i\partial g\wedge\bar\partial g+g^{n-2}\wedge i\partial\bar\partial g
\end{equation}
and Stokes' theorem, it follows that 
\begin{equation}
(n-2)\int_Xg^{n-3}\wedge i\partial g\wedge\bar\partial g+\int_Xg^{n-2}\wedge i\partial\bar\partial g=0
\end{equation}
and, since both integrals are positive, it follows that they have to be zero. Therefore $g^{n-3}\wedge i\partial g\wedge\bar\partial g=g^{n-2}\wedge i\partial\bar\partial g=0$, that is the trace measures of the positive currents $i\partial g\wedge\bar\partial g$ and $i\partial\bar\partial g$ with respect to $g$ are zero, hence they have to be zero.
\end{rmk}

\begin{rmk}
If a compact complex manifold admits a Hermitian metric $g$ as above, i.e., it satisfies $i\partial\bar\partial g=i\partial\bar\partial g^2=0$, and if $p:Y\to X$ is a blowup with a smooth center, then $Y$ supports a Hermitian metric $g'$ with the same property. Indeed, if $E$ is the exceptional divisor, then the line bundle ${\mathcal O}(-[E])$ has a metric with curvature $\beta$ such that $g'=Np^*g+\beta$ is positive for $N$ a large enough constant. Then it is trivial to show that $g'$ satisfies the same equations as $g$. However, it is not true that this property is a bimeromorphic invariant, and this follows from the fact that a manifold bimeromorphic to a K\"ahler manifold need not be K\"ahler \cite{hironaka}, combined with the fact that a Fujiki manifold which supports a $SKT$ metric is K\"ahler \cite{chiose}.
\end{rmk}
\begin{rmk}
It is fairly easy to construct metrics as in Theorem \ref{main}. Take $(X,g)$ to be a manifold as in Theorem \ref{main} and $(Y,h)$ a K\"ahler manifold. If $p_X$ and $p_Y$ denote the two projections, then the metric $p_X^*g+p^*_Yh$ also satisfies the above equations on $X\times Y$. Now take $g$ to be a Gauduchon metric on a non-K\"ahler surface $X$ to obtain examples of non-K\"ahler manifolds that admit metrics as above.
\end{rmk}
\begin{rmk}
A similar question is the following:

\begin{ques}
Characterize the Hermitian metrics $g$ on a compact complex manifold $X$ such that there exists a constant $C$ such that $$\int_X(g+i\partial\bar\partial u)^n\leq C, \forall u\in PSH(X,g)$$
\end{ques}\noindent
This question is related to the proof of Theorem 4.1 in \cite{chiose1}. For instance, on surfaces, any Hermitian metric has the above property, while on $3$-folds, this class includes the Hermitian metrics $g$ that satisfy $i\partial\bar\partial g\geq 0$. Indeed, if $u\in PSH(X,g)$, then $$\int_X(g+i\partial\bar\partial u)^3=\int_Xg^3+3\int_Xg^2\wedge i\partial\bar\partial u+3\int_Xg\wedge  ( i\partial\bar\partial u )^2$$
and the second integral on the right is known to be bounded, while the third integral is negative if $i\partial\bar\partial g\geq 0$ since 
$$\int_X g\wedge (i\partial\bar\partial u)^2=-\int_Xi\partial\bar\partial g\wedge i\partial u\wedge\bar\partial u \leq 0$$
\noindent
from Stokes' theorem. In particular, as in \cite{chiose1}, we obtain that if a Hermitian $3$-fold $(X,g)$ such that $i\partial\bar\partial g\geq 0$ admits a nef class $\alpha$ of non-negative self-intersection, then $X$ is K\"ahler (see Theorem 2.3 in  \cite{chiose}).
\end{rmk}

\section*{Acknowledgments}
The author was supported by 
 a CNCS - UEFISCDI grant, project no.
PN-III-P4-ID-PCE-2016-0341.


\section*{References}

\end{document}